\numberwithin{equation}{section}
\newcommand{\kommentar}[1]{}
\newcommand{\R}{\mathbb{R}}
\newcommand{\N}{\mathbb{N}}
\newcommand{\Z}{\mathbb{Z}}
\newtheorem{thm}{Theorem}[section]
\newtheorem{defn}[thm]{Definition}
\newtheorem{prop}[thm]{Proposition}
\newtheorem{lem}[thm]{Lemma}
\newcommand{\pn}{\sqrt{P_{n}}}
\newcommand{\seqnum}[1]{\href{https://oeis.org/#1}{\rm \underline{#1}}}
\theoremstyle{remark}
\title{An Equidistribution Result for Differences Associated with Square Pyramidal Numbers}
\author{Anji Dong, Katerina Saettone, Kendra Song and Alexandru Zaharescu}
\address{
Anji Dong: Department of Mathematics,
University of Illinois Urbana-Champaign,
Altgeld Hall, 1409 West Green Street,
Urbana, IL, 61801, USA}
\email{anjid2@illinois.edu}
\address{
Katerina Saettone: Department of Mathematics,
University of Illinois Urbana-Champaign,
Altgeld Hall, 1409 West Green Street,
Urbana, IL, 61801, USA}
\email{kas18@illinois.edu}
\address{
Kendra Song: Department of Mathematics,
University of Illinois Urbana-Champaign,
Altgeld Hall, 1409 West Green Street,
Urbana, IL, 61801, USA}
\email{kendras4@illinois.edu}
\address{
Alexandru Zaharescu: Department of Mathematics,
University of Illinois Urbana-Champaign,
Altgeld Hall, 1409 West Green Street,
Urbana, IL, 61801, USA and Simion Stoilow Institute of Mathematics of the Romanian Academy, 
P.\ O.\ Box 1-764, RO-014700 Bucharest, Romania}
\email{zaharesc@illinois.edu} 
\begin{document}
\keywords{Cannonball problem, exponential sums, discrepancy, equidistribution}
\subjclass{Primary: 11L07. Secondary: 11B99, 11K38, 11K06}
\begin{abstract}
    We provide an asymptotic formula for the average value of the sequence \seqnum{A351830}: $a_{n} = |P_{n} - y^{2}_{n}|$ for $1 \leq n \leq x$, where $P_{n}$ is the $n$-th square pyramidal number and $y^{2}_{n}$ is the closest square to $P_{n}$. Moreover, we supply asymptotic formulas for the $k$-th moment of the same sequence, for any fixed natural number $k$.
\end{abstract}
\maketitle

\section{Introduction}\label{introduction}

In the 1500s, Sir Walter Raleigh proposed the problem of counting the cannonballs in a square pyramid during a sea voyage, which is now known as the Cannonball problem. Later, the problem was officially proposed by Lucas \cite{questions}, which studies what integers are both a square and a square pyramidal number. It was eventually proved by Watson \cite{WatsonCannonball} that the only solutions are $0,1,$ and $4900$. 

There are many integer sequences associated with the Cannonball problem, for example, see \seqnum{A001032}, \seqnum{A351830}, \seqnum{A350886}, \seqnum{A350887}, and \seqnum{A350888} in the On-Line Encyclopedia of Integer Sequences (OEIS) \cite{oeis}. In this paper, we establish additional properties of the sequence \seqnum{A351830}. This is given by
\begin{align}\label{a_n first definition}
    a_{n} = \big\lvert P_{n} - y^{2}_{n} \big\rvert,
\end{align}
where $P_{n}$ is the $n$-th square pyramidal number defined as
\begin{align}
   P_n &= \sum_{i = 1}^n i^2= \frac{2n^3+3n^2+n}{6},\label{defn: square pyramidal number} 
\end{align} 
and $y^{2}_{n}$ is the closest square to $P_{n}$. In fact, Paolo Xausa \cite{paolo} computed the first ten thousand elements of the sequence.

Note that the only values of $n$ such that $a_{n} = 0$ are $0, 1,$ and $24$, which correspond to the solutions to the Cannonball problem. Moreover, noted by Conway and Sloane \cite{conway1982, conway1999}, the solution corresponding to $n=24$ allows for the Lorentzian construction of the Leech lattice. Further applications and generalizations of the Cannonball problem are discussed by Laub \cite{Laub1990}, Beeckmans \cite{beeckmans1994}, and Bennett \cite{bennett2002}. 

For real $x \geq 1$, let $A(x)$ represent the average value of $(a_{n})$ for $1 \leq n \leq x$. More precisely,
\begin{align}\label{A(x) first definition}
A(x) := \frac{1}{x}\sum_{1 \leq n \leq x} a_{n}.
\end{align}

Hence, since $a_n$ represents the distance between the $n$-th square pyramidal number and the closest square, $A(x)$ represents the average distance from a square pyramidal number to its closest square up to the $\lfloor x\rfloor$-th square pyramidal number. 
Our primary goal is to provide an asymptotic formula for $A(x)$.
\begin{thm}\label{asymptotic for A(x) theorem}
For real $x\geq 1$, the function $A(x)$, given by \eqref{A(x) first definition}, satisfies the asymptotic formula
\begin{align}\label{A(x) asymptotic}
    A(x) = \frac{1}{5\sqrt{3}}x^{3/2}+O(x^{17/12}),
\end{align}
where the constant implied by the big $O$ symbol is absolute.
\end{thm}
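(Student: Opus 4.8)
The plan is to reduce $\sum_{1\le n\le x}a_n$ to a weighted equidistribution statement for the sequence $\sqrt{P_n}\bmod 1$. Let $y_n$ denote the nearest integer to $\sqrt{P_n}$ (well defined, since $P_n\in\Z$ forces $\sqrt{P_n}$ never to be a half-integer), put $\theta_n=\sqrt{P_n}-y_n\in(-1/2,1/2)$, and write $\|t\|$ for the distance from $t$ to the nearest integer, so that $\|\sqrt{P_n}\|=|\theta_n|$. Then
\begin{align*}
a_n=|P_n-y_n^2|=\bigl|\theta_n\,(2\sqrt{P_n}-\theta_n)\bigr|=|\theta_n|\,(2\sqrt{P_n}-\theta_n)=2\sqrt{P_n}\,\|\sqrt{P_n}\|+O(1),
\end{align*}
and hence $\sum_{1\le n\le x}a_n=2\sum_{1\le n\le x}\sqrt{P_n}\,\|\sqrt{P_n}\|+O(x)$. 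Next I would insert the Fourier expansion of the triangular wave, $\|t\|=\tfrac14+\sum_{m\neq 0}\widehat\psi(m)e(mt)$ with $|\widehat\psi(m)|\le \pi^{-2}m^{-2}$, truncated at a height $M$ to be chosen; the tail contributes $O\bigl(M^{-1}\sum_{n\le x}\sqrt{P_n}\bigr)=O(x^{5/2}/M)$. This splits the sum into a main term $\tfrac12\sum_{n\le x}\sqrt{P_n}$ and the exponential sums $2\sum_{0<|m|\le M}\widehat\psi(m)\,T_m(x)$, where $T_m(x):=\sum_{n\le x}\sqrt{P_n}\,e(m\sqrt{P_n})$.

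For the main term, the binomial expansion $\sqrt{P_n}=\tfrac{1}{\sqrt3}n^{3/2}+\tfrac{\sqrt3}{4}n^{1/2}+O(n^{-1/2})$ together with Euler--Maclaurin gives $\sum_{n\le x}\sqrt{P_n}=\tfrac{2}{5\sqrt3}x^{5/2}+O(x^{3/2})$, so $\tfrac12\sum_{n\le x}\sqrt{P_n}=\tfrac{1}{5\sqrt3}x^{5/2}+O(x^{3/2})$; after dividing the final identity by $x$, this produces exactly the leading term $\tfrac{1}{5\sqrt3}x^{3/2}$ of \eqref{A(x) asymptotic}.

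The heart of the argument is to show that the exponential sums are of lower order. Since $n\mapsto\sqrt{P_n}$ is increasing with $\sqrt{P_{n+1}}-\sqrt{P_n}=(n+1)^2\big/\bigl(\sqrt{P_{n+1}}+\sqrt{P_n}\bigr)\ll n^{1/2}$ and $\sqrt{P_n}\ll x^{3/2}$ on $[1,x]$, partial summation yields $|T_m(x)|\ll x^{3/2}\max_{u\le x}\bigl|\sum_{n\le u}e(m\sqrt{P_n})\bigr|$. To bound $\sum_{n\le u}e(m\sqrt{P_n})$ I would apply van der Corput's second-derivative test to the phase $f(t)=m\sqrt{P_t}$: a direct computation gives $f''(t)=m\,\bigl(2P_tP_t''-(P_t')^2\bigr)\big/\bigl(4P_t^{3/2}\bigr)$ with $2P_tP_t''-(P_t')^2=\tfrac13 t^4+O(t^3)$ of constant sign, so $f''(t)\asymp m\,t^{-1/2}$ on $[1,u]$; splitting $[1,u]$ dyadically, the test gives $\sum_{n\le u}e(m\sqrt{P_n})\ll m^{1/2}u^{3/4}$ for $1\le m\le u^{1/2}$, and the trivial bound $u$ otherwise. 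Hence $|T_m(x)|\ll m^{1/2}x^{9/4}$ for $m\le x^{1/2}$ and $|T_m(x)|\ll x^{5/2}$ always, so $\sum_{0<|m|\le M}|\widehat\psi(m)|\,|T_m(x)|\ll x^{9/4}\sum_m m^{-3/2}+x^{5/2}\sum_{m>x^{1/2}}m^{-2}\ll x^{9/4}$.

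Collecting everything, $\sum_{1\le n\le x}a_n=\tfrac{1}{5\sqrt3}x^{5/2}+O\bigl(x^{5/2}/M+x^{9/4}+x\bigr)$; choosing $M=x^{1/12}$ makes the truncation term $O(x^{29/12})$ dominate, so $\sum_{1\le n\le x}a_n=\tfrac{1}{5\sqrt3}x^{5/2}+O(x^{29/12})$, and dividing by $x$ gives \eqref{A(x) asymptotic}. I expect the exponential-sum step to be the main obstacle: one must bound $\sum_{n\le u}e(m\sqrt{P_n})$ uniformly in $m$, with a dependence on $m$ mild enough to survive summation against the Fourier coefficients $m^{-2}$ while keeping all implied constants absolute, and this is precisely where the van der Corput machinery for the $n^{3/2}$-type phase $m\sqrt{P_t}$ enters; the remaining steps are routine.
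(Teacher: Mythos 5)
Your proof is correct, and it takes a genuinely different route from the paper's. The paper obtains Theorem \ref{asymptotic for A(x) theorem} as the case $k=1$ of Theorem \ref{Moments theorem}, whose proof partitions the range $[0,\tfrac12]$ of $|\sqrt{P_n}-y_n|$ into $L$ subintervals and the range of $n$ into $M$ blocks, controls the count in each block via the Erd\H{o}s--Tur\'an inequality (Lemma \ref{erdos turan inequality}) combined with the second-derivative exponential-sum bound (Lemma \ref{bound for exponential sums}), and then balances the parameters $K,L,M$ with Lemma \ref{lem:min_max_for_minor} to reach the exponent $\tfrac{3k}{2}+\tfrac{11}{12}$. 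You instead write $a_n=2\sqrt{P_n}\,\|\sqrt{P_n}\|+O(1)$, insert the absolutely convergent Fourier expansion of $\|t\|$, and bound the resulting weighted sums $T_m(x)$ by partial summation plus the same van der Corput input $f''(t)\asymp m t^{-1/2}$; this bypasses the discrepancy and optimization machinery entirely, and since your error is $O(x^{5/2}/M+x^{9/4})$ you could take $M=x^{1/4}$ and obtain the sharper $A(x)=\tfrac{1}{5\sqrt3}x^{3/2}+O(x^{5/4})$ (your choice $M=x^{1/12}$ merely reproduces the paper's weaker exponent). What the paper's method buys is a uniform treatment of all moments $k$; yours is more direct and stronger for $k=1$, and would extend to general $k$ by expanding $\|t\|^{k}$ instead. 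One small elision to repair: the paper's $y_n$ is defined via the nearest \emph{square} to $P_n$, which for $n$ in the exceptional set \eqref{exception set} (those $n$ with $\{\sqrt{P_n}\}$ within $O(P_n^{-1/2})$ of $\tfrac12$) need not be the square of the nearest integer to $\sqrt{P_n}$; since the two candidate values of $|P_n-y^2|$ then differ by $O(1)$, your key identity $a_n=2\sqrt{P_n}\,\|\sqrt{P_n}\|+O(1)$ still holds, but the first equality in your opening display should be stated as an equality up to $O(1)$ rather than an exact one.
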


Next, we generalize Theorem \ref{asymptotic for A(x) theorem} by considering higher moments of the sequence $(a_{n})$. For each $k\in\N$, let $M_{k}(x)$ represent the $k$-th moment of $(a_{n})_{n\leq x}$, that is,
\begin{align}\label{k moment definition}
    M_{k}(x) := \sum_{1 \leq n \leq x}a_{n}^{k}.
\end{align}
By Theorem \ref{asymptotic for A(x) theorem},
\[
M_{1}(x) = xA(x) = \frac{1}{5\sqrt{3}}x^{5/2}+O(x^{29/12}).
\]
More generally, we have the following result. 
\begin{thm}\label{Moments theorem}
For any positive integer $k$,
\begin{align}
    M_{k}(x)=  \frac{x^{\frac{3}{2}k + 1}}{3^{k/2}(\frac{3}{2}k + 1)(k+1)}+O_k\left(x^{\frac{3}{2}k+\frac{11}{12}}\right),
\end{align}
 where $M_{k}$ is defined in \eqref{k moment definition}, and the constant implied by the $O_k$ symbol depends at most on $k$.  
\end{thm}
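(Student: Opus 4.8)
The plan is to carry out, uniformly in $k$, the strategy behind Theorem~\ref{asymptotic for A(x) theorem}: first replace $a_n^{k}$ by an explicit quantity with a controlled error, then recognize the resulting sum as a weighted equidistribution sum for $\sqrt{P_n}$ modulo $1$, and finally estimate the associated exponential sums. For the reduction, let $m_n$ be the integer nearest to $\sqrt{P_n}$ and set $\epsilon_n=\sqrt{P_n}-m_n\in[-\tfrac12,\tfrac12]$, so $|\epsilon_n|=\lVert\sqrt{P_n}\rVert=:\delta_n$. As long as $\delta_n<\tfrac12$ the square $m_n^{2}$ is the one closest to $P_n$, giving the exact identity $a_n=|P_n-m_n^{2}|=|\epsilon_n|\,(2\sqrt{P_n}-\epsilon_n)$ and hence $a_n=2\sqrt{P_n}\,\delta_n+O(1)$ with an absolute implied constant. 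Raising this to the $k$-th power and expanding binomially gives $a_n^{k}=2^{k}P_n^{k/2}\delta_n^{k}+O_k\bigl(n^{3(k-1)/2}\bigr)$, and replacing $P_n^{k/2}$ by $3^{-k/2}n^{3k/2}+O_k(n^{3k/2-1})$ reduces the problem to
\[
M_k(x)=\frac{2^{k}}{3^{k/2}}\sum_{n\le x}n^{3k/2}\,\bigl\lVert\sqrt{P_n}\bigr\rVert^{k}+O_k\bigl(x^{3k/2}\bigr).
\]

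Next I would insert the Fourier expansion of the $1$-periodic function $g(\theta)=\lVert\theta\rVert^{k}$, which is even, Lipschitz and piecewise polynomial; thus $g(\theta)=\sum_{h\in\Z}c_h\,e(h\theta)$, where $e(t):=e^{2\pi i t}$,
\[
c_0=\int_0^{1}\lVert\theta\rVert^{k}\,d\theta=\frac{1}{2^{k}(k+1)},\qquad c_h\ll_k|h|^{-2}\quad(h\neq0),
\]
the decay of $c_h$ following from two integrations by parts together with the jumps of $g'$ at $\theta\in\{0,\tfrac12\}$. Truncating the series at height $H$ (tail $O_k(1/H)$) reduces the last sum to $\sum_{|h|\le H}c_h\sum_{n\le x}n^{3k/2}e(h\sqrt{P_n})$. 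The term $h=0$ contributes $c_0\sum_{n\le x}n^{3k/2}=\frac{2c_0}{3k+2}\,x^{3k/2+1}+O_k(x^{3k/2})$, and after multiplication by $2^{k}3^{-k/2}$ and substitution of $c_0=\frac{1}{2^{k}(k+1)}$ this is precisely the main term $\frac{x^{3k/2+1}}{3^{k/2}(\frac{3}{2}k+1)(k+1)}$ of the theorem.

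For $h\neq0$, partial summation gives $\bigl|\sum_{n\le x}n^{3k/2}e(h\sqrt{P_n})\bigr|\ll_k x^{3k/2}\max_{y\le x}\bigl|\sum_{n\le y}e(h\sqrt{P_n})\bigr|$, so everything hinges on the pure exponential sum $\sum_{n\le y}e(h\sqrt{P_n})$. Here the crucial structural fact is that $\sqrt{P_n}$, viewed as a function of a real variable $n$, has a single-signed, regular second derivative,
\[
\frac{d^{2}}{dn^{2}}\sqrt{P_n}=\frac{12n^{2}(n+1)^{2}-1}{144\,P_n^{3/2}}\asymp n^{-1/2}\qquad(n\ge1),
\]
so van der Corput's second-derivative test (equivalently the exponent pair $(\tfrac12,\tfrac12)$), applied on dyadic blocks $(N,2N]$ where $\lambda\asymp hN^{-1/2}$, yields $\sum_{n\le y}e(h\sqrt{P_n})\ll h^{1/2}y^{3/4}$. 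Since $\sum_{h\ge1}|c_h|h^{1/2}\ll_k\sum_{h\ge1}h^{-3/2}<\infty$, the total contribution of the terms $0<|h|\le H$ is $O_k(x^{3k/2+3/4})$; choosing $H=x^{1/4}$ to balance this against the truncation error $O_k(x^{3k/2+1}/H)$ and collecting everything gives
\[
M_k(x)=\frac{x^{3k/2+1}}{3^{k/2}(\frac{3}{2}k+1)(k+1)}+O_k\bigl(x^{3k/2+3/4}\bigr),
\]
which implies (indeed sharpens) the stated error term, since $3/4<11/12$.

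The step I expect to require the most care is this exponential sum estimate. Once $\frac{d^{2}}{dn^{2}}\sqrt{P_n}$ has been computed and seen to be positive and of order $n^{-1/2}$ for all $n\ge1$, the van der Corput bound is immediate; the remaining labour is to assemble the dyadic pieces, restore the weight $n^{3k/2}$ by partial summation, and sum against the $c_h$ over $|h|\le H$ while keeping every implied constant's dependence on $k$ explicit. The other ingredients — the nearest-square identity, the binomial expansions, the Fourier coefficients of $\lVert\theta\rVert^{k}$, and the power sums $\sum_{n\le x}n^{s}$ — are routine; the real work lies in organizing the error analysis so that every discarded term is verifiably $O_k(x^{3k/2+11/12})$ or smaller.
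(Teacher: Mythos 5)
Your argument is correct and reaches the stated asymptotic, in fact with the stronger error term $O_k(x^{3k/2+3/4})$, but it proceeds by a genuinely different route from the paper. The paper also begins with the factorization $a_n=|\sqrt{P_n}-y_n|\,|\sqrt{P_n}+y_n|$ and the expansion $(\sqrt{P_n}+y_n)^k=(2/\sqrt{3})^kn^{3k/2}+O_k(n^{3k/2-1})$, but it then treats $\|\sqrt{P_n}\|^k$ by slicing the range of $\|\sqrt{P_n}\|$ into $L$ subintervals and the range of $n$ into $M$ blocks, estimating the count in each box via the Erd\H{o}s--Tur\'an inequality combined with the van der Corput second-derivative test (Lemma \ref{bound for exponential sums}), and finally optimizing the three parameters $K,L,M$ with Lemma \ref{lem:min_max_for_minor}; the $11/12$ exponent is the outcome of that optimization. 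You instead expand the weight $\|\theta\|^k$ directly into its Fourier series, exploit that its coefficients satisfy $c_h\ll_k h^{-2}$ (so the series is absolutely summable and the tail beyond height $H$ costs only $O_k(x^{3k/2+1}/H)$), remove the polynomial weight $n^{3k/2}$ losslessly by partial summation, and apply the second-derivative test to the pure sums $\sum e(h\sqrt{P_n})$ with $f''\asymp hn^{-1/2}$. This avoids both the $N/(K+1)$ loss inherent in Erd\H{o}s--Tur\'an (which is tailored to indicator functions, not to the Lipschitz weight $\|\theta\|^k$) and the step-function approximation of $n^{3k/2}$ over $M$ blocks, which is why you end up with $3/4$ in place of $11/12$. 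The one point to tighten is the claim that $\delta_n<\tfrac12$ forces $m_n^2$ to be the nearest square: when $\{\sqrt{P_n}\}$ lies in $(\tfrac12,\tfrac12+O(n^{-3/2}))$ the nearest square can be $(m_n-1)^2$ rather than $m_n^2$ (this is exactly the exceptional set $E(x)$ of Proposition \ref{sqrt pn - yn uniform distribution}), so the "exact identity" is not quite exact there; however, a direct check shows $a_n=2\sqrt{P_n}\,\|\sqrt{P_n}\|+O(1)$ still holds for those $n$, since switching between $m_n$ and $m_n-1$ perturbs the product by $O(\sqrt{P_n}/m_n)=O(1)$, so nothing downstream is affected. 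With that sentence added, your proof is complete and sharper than the paper's.
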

\subsection*{Structure of the Paper} The paper is organized as follows. We begin by introducing some standard notation in Section \ref{sec: general notation}. In Section \ref{sec: auxiliary results}, we provide some auxiliary results needed in the proofs of the main theorems. Lastly, in Section \ref{sec: proofs}, we complete the proofs of Theorems \ref{asymptotic for A(x) theorem} and \ref{Moments theorem}.

\section{General Notation}\label{sec: general notation}
We employ some standard notation used throughout the article.
\begin{itemize} 
\item The expressions $f(X)=O(g(X))$, $f(X) \ll g(X)$, and $g(X) \gg f(X)$ are equivalent to the statement that $|f(X)| \leq C|g(X)|$ for all sufficiently large $X$, where $C>0$ is an absolute constant. A subscript of the form $\ll_{\alpha}$ means that the implied constant may depend on the parameter $\alpha$. Dependence on several parameters is indicated similarly, as in $\ll_{\alpha, \lambda}$.
\item Given $\alpha \in \mathbb{R}$, the notation $\|\alpha\|$ denotes the smallest distance of $\alpha$ to an integer.
\item Given a set $S$, the notation $|S|$ and $\#S$ both stand for the cardinality of $S$. 
\item For $x\in\R$, the notation $\lceil x\rceil$ denotes the ceiling of $x$, which is the smallest integer larger than $x$. The notation $\lfloor x\rfloor$ denotes the floor of $x$, which is the largest integer smaller than $x$. 
\item For real $x\in\R$, $\{x\}$ denotes the fractional part of $x$, that is, $x-\lfloor x\rfloor$.
\item The notation $e(x)$ stands for $\exp(2\pi i x)$.
\end{itemize}

\section{Auxiliary Propositions}\label{sec: auxiliary results}

In this section, we aim to show that $(a_n)$, defined in \eqref{a_n first definition}, is equidistributed in $[0,\frac{1}{2}]$. We begin with the following lemma by Kuipers and Niederreiter \cite{UniformDistributionofSequences}.
\begin{lem}[{\cite[Thm.\ 2.7, p.\ 17]{UniformDistributionofSequences}}]\label{bound for exponential sums}
    Let $a$ and $b$ be integers with $a < b$. Assume there exists some $\rho\in\R^{+}$ such that $f$ is twice differentiable on $[a,b]$ with $f''(x) \geq \rho >0$ or $f''(x) \leq -\rho < 0$ for all $x \in [a,b]$. Then,
    \begin{align}\label{bound for exponential sums inequality}
        \bigg| \sum^{b}_{n=a} e(f(n)) \bigg| \leq \bigg(\big|f'(b) - f'(a)\big| + 2\bigg) \bigg( \frac{4}{\sqrt{\rho}} + 3 \bigg).
    \end{align}
\end{lem}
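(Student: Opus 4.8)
The plan is to deduce the estimate from the Kusmin--Landau inequality (the classical first-derivative test for exponential sums) combined with a subdivision of $[a,b]$ governed by the lower bound on $f''$; this is exactly how van der Corput's second-derivative test is usually proved. Replacing $f$ by $-f$ alters neither the hypothesis nor $\big|\sum_{n=a}^{b} e(f(n))\big|$, so I would assume throughout that $f''(x)\ge\rho>0$ on $[a,b]$; then $f'$ is continuous and strictly increasing, and its image is an interval of length $L:=f'(b)-f'(a)\ge 0$.

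First I would record the Kusmin--Landau inequality in explicit form: if $g$ has a continuous, monotone derivative on an interval $[c,d]$ and $\|g'(x)\|\ge\lambda$ for all $x\in[c,d]$, with $0<\lambda\le\tfrac12$, then
\[
\Big|\sum_{c\le n\le d} e(g(n))\Big| \;\le\; \cot\!\frac{\pi\lambda}{2} \;\le\; \frac{2}{\pi\lambda}.
\]
This is short --- it follows from Abel summation against the kernel $\sum e(n\theta)$, using $|1-e(\theta)|^{-1}=(2|\sin\pi\theta|)^{-1}$ and the monotonicity of $g'$ --- so I would either quote it from the same monograph or include the half-page argument.

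Next comes the subdivision. Fix a parameter $\lambda\in(0,\tfrac12]$, to be optimized at the end. Since $f'$ is increasing, I would cut $[a,b]$ at the preimages under $f'$ of the integers lying in $[f'(a),f'(b)]$; this yields at most $|f'(b)-f'(a)|+2$ consecutive subintervals, on each of which $f'$ stays within some unit interval $[m,m+1]$ between consecutive integers. On such a subinterval $I$, the set where $\|f'(x)\|<\lambda$ is the union of at most two pieces abutting the endpoints of $I$; since $f''\ge\rho$ forces $f'$ to move at rate at least $\rho$, each such piece has length at most $\lambda/\rho$ and hence contains at most $\lambda/\rho+1$ integers, so the partial sum over them is trivially at most $2\lambda/\rho+2$. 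On the complementary middle part of $I$ one has $\|f'\|\ge\lambda$ with $f'$ still monotone, so the first-derivative test from the previous step bounds the partial sum there by $\cot(\pi\lambda/2)$.

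Summing over the at most $|f'(b)-f'(a)|+2$ subintervals gives
\[
\Big|\sum_{n=a}^{b} e(f(n))\Big| \;\le\; \big(|f'(b)-f'(a)|+2\big)\left(\frac{2\lambda}{\rho}+2+\cot\!\frac{\pi\lambda}{2}\right),
\]
and choosing $\lambda=\sqrt{\rho}$ when $\rho\le\tfrac14$ and $\lambda=\tfrac12$ when $\rho>\tfrac14$ collapses the second factor to at most $\tfrac{4}{\sqrt\rho}+3$ in both cases, which is \eqref{bound for exponential sums inequality}. I expect the only real difficulty to be the constant-chasing: producing exactly $|f'(b)-f'(a)|+2$ and $\tfrac{4}{\sqrt\rho}+3$ --- rather than the soft bound $\ll(|f'(b)-f'(a)|+1)(\rho^{-1/2}+1)$ --- requires counting the subintervals sharply, wasting nothing at the two boundary subintervals, using the precise $\cot(\pi\lambda/2)$ form of Kusmin--Landau, and splitting into the cases $\rho\le\tfrac14$ and $\rho>\tfrac14$ so that the constraint $\lambda\le\tfrac12$ is never active when it would cost anything. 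Everything else is routine.
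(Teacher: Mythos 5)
This lemma is quoted in the paper directly from Kuipers and Niederreiter (their Theorem 2.7) and is not proved there, so there is no in-paper argument to compare against; your proposal is the standard van der Corput second-derivative-test proof, which is essentially the argument in the cited source. The structure is sound and the constant-chasing checks out: with $\lambda=\sqrt{\rho}$ for $\rho\le\frac14$ one gets $2/\sqrt{\rho}+2+\cot(\pi\sqrt{\rho}/2)\le(2+\tfrac{2}{\pi})/\sqrt{\rho}+2\le 4/\sqrt{\rho}+3$, and for $\rho>\frac14$ the bound $1/\rho+3\le 4/\sqrt{\rho}+3$ holds since $\rho\ge\frac{1}{16}$, so your proof is correct.
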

\begin{lem}[{\cite[Weyl's Criterion, p.\ 1]{montgomery1994ten}}]\label{weyl criterion}
A sequence $(a_n)$ is uniformly distributed if and only if for each integer $k\neq 0$, 
\[
\lim_{N\rightarrow\infty}\frac{1}{N}\sum_{n=1}^N e(ka_n) = 0.
\]
\end{lem}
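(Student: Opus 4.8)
The plan is to prove the two implications separately, using as a bridge the fact that $(a_n)$ is uniformly distributed modulo $1$ if and only if, for every continuous function $g$ of period $1$,
\[
\lim_{N\to\infty}\frac{1}{N}\sum_{n=1}^N g(a_n)=\int_0^1 g(x)\,dx.
\]
Recall that, by definition, $(a_n)$ is uniformly distributed modulo $1$ when for every $0\le\alpha<\beta\le 1$ the proportion of indices $n\le N$ with $\{a_n\}\in[\alpha,\beta)$ tends to $\beta-\alpha$; equivalently, the displayed limit holds with $g$ replaced by the periodically extended indicator $\mathbf 1_{[\alpha,\beta)}$. So the first step is to record this equivalence between interval counts and averages of continuous test functions. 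One direction follows by sandwiching each indicator between continuous $1$-periodic functions $g^-\le \mathbf 1_{[\alpha,\beta)}\le g^+$ whose integrals differ from $\beta-\alpha$ by at most a prescribed $\varepsilon$; the other follows by uniformly approximating any continuous $g$ by step functions, which are finite combinations of interval indicators.

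The forward implication is then immediate. If $(a_n)$ is uniformly distributed, I apply the displayed criterion to $g(x)=e(kx)$, which is continuous and $1$-periodic. Since $\int_0^1 e(kx)\,dx=0$ for every integer $k\neq 0$, the averages $\tfrac1N\sum_{n\le N}e(ka_n)$ tend to $0$, as claimed.

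For the converse, assume $\tfrac1N\sum_{n\le N}e(ka_n)\to 0$ for all $k\neq 0$. By linearity this forces
\[
\lim_{N\to\infty}\frac1N\sum_{n=1}^N T(a_n)=\int_0^1 T(x)\,dx
\]
for every trigonometric polynomial $T(x)=\sum_{|k|\le K}c_k e(kx)$, because the $k=0$ term contributes exactly $c_0=\int_0^1 T$ while every other term vanishes in the limit. I would then upgrade this from trigonometric polynomials to arbitrary continuous $1$-periodic $g$ via the Weierstrass trigonometric approximation theorem: choosing $T$ with $\sup_x|g(x)-T(x)|<\varepsilon$ and combining the triangle inequality with the convergence already established for $T$ shows that $\limsup$ and $\liminf$ of $\tfrac1N\sum_{n\le N}g(a_n)$ both lie within $O(\varepsilon)$ of $\int_0^1 g$, so the limit equals $\int_0^1 g$. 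Feeding the sandwiching functions $g^\pm$ from the first step back in then recovers the interval-counting definition of uniform distribution.

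The main obstacle is the backward direction's two successive approximation steps: passing from the hypothesis, stated only for pure exponentials, to all continuous periodic functions requires Weierstrass's theorem, and passing from continuous functions to indicators of intervals requires the squeeze construction of $g^\pm$. Neither step is deep, but each introduces an $\varepsilon$ that must be tracked so that the $\limsup$/$\liminf$ comparison genuinely pins the limit to $\beta-\alpha$; arranging these two approximations to interlock cleanly is where the real content of the argument lies.
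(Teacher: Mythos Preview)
Your argument is correct and is precisely the standard proof of Weyl's criterion (as in Kuipers--Niederreiter or Montgomery): the forward direction is immediate from the continuous-test-function formulation applied to $e(kx)$, and the backward direction proceeds by Weierstrass trigonometric approximation followed by a squeeze with continuous majorants and minorants of interval indicators. There is nothing to compare against in the paper itself, since this lemma is quoted from \cite{montgomery1994ten} without proof; your write-up would serve perfectly well as a self-contained justification.
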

\begin{prop}\label{sqrt pn u.d}
    The sequence $\{\sqrt{P_{n}}\}$ is uniformly distributed in the unit interval $[0,1]$.
\end{prop}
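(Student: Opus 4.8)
The strategy is to apply Weyl's Criterion (Lemma \ref{weyl criterion}): it suffices to show that for each fixed nonzero integer $k$, the exponential sum $\sum_{n=1}^N e(k\sqrt{P_n})$ is $o(N)$. Since $P_n = \frac{2n^3+3n^2+n}{6}$, for large $n$ we have $\sqrt{P_n} = \sqrt{\frac{2}{6}}\, n^{3/2}(1 + O(1/n))^{1/2}$, so $\sqrt{P_n}$ behaves like $c\, n^{3/2}$ with $c = 1/\sqrt{3}$, up to lower-order terms. Thus I set $f(t) = k\sqrt{P_t}$ (viewing $P_t$ as the polynomial in \eqref{defn: square pyramidal number} evaluated at real $t$) and aim to bound $\sum_{a \le n \le b} e(f(n))$ using Lemma \ref{bound for exponential sums}.

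First I would compute $f''(t)$ and check its size. Writing $P(t) = \frac{2t^3+3t^2+t}{6}$, we have $f(t) = k P(t)^{1/2}$, $f'(t) = \frac{k P'(t)}{2\sqrt{P(t)}}$, and $f''(t) = \frac{k}{2}\left(\frac{P''(t)}{\sqrt{P(t)}} - \frac{P'(t)^2}{2 P(t)^{3/2}}\right)$. Since $P(t) \asymp t^3$, $P'(t) \asymp t^2$, and $P''(t) \asymp t$, the first term is $\asymp t \cdot t^{-3/2} = t^{-1/2}$ and the second is $\asymp t^4 \cdot t^{-9/2} = t^{-1/2}$; a short computation shows these do not cancel to leading order, so $|f''(t)| \asymp |k| t^{-1/2}$ on a dyadic block $[T, 2T]$. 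To get a \emph{uniform} lower bound $\rho$ on such a block, I take $\rho \asymp |k| T^{-1/2}$ (more precisely $\rho = c|k|(2T)^{-1/2}$ for a suitable absolute $c$), and I note that $f''$ has constant sign for all large $t$, so the monotonicity hypothesis of Lemma \ref{bound for exponential sums} holds on $[T,2T]$ once $T$ exceeds an absolute threshold.

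Next I would plug into \eqref{bound for exponential sums inequality}. On $[T,2T]$ we have $|f'(2T) - f'(T)| \ll |k| T^{1/2}$ (since $f'(t) \asymp |k| t^{1/2}$), and $\frac{4}{\sqrt{\rho}} + 3 \ll |k|^{-1/2} T^{1/4} + 1 \ll |k|^{-1/2}T^{1/4}$ for $T$ large. Hence the sum over a dyadic block is $\ll (|k| T^{1/2} + 2)\,(|k|^{-1/2} T^{1/4}) \ll |k|^{1/2} T^{3/4}$. Summing over the $O(\log N)$ dyadic blocks covering $[1,N]$ (and handling the $O(1)$-length initial block trivially), the total is $\ll_k N^{3/4}\log N = o(N)$. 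Therefore $\frac{1}{N}\sum_{n=1}^N e(k\sqrt{P_n}) \to 0$ for every nonzero integer $k$, and Weyl's Criterion gives the claim.

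The main obstacle I anticipate is purely computational rather than conceptual: verifying that $f''(t)$ genuinely has order $t^{-1/2}$ (that the two competing terms of size $t^{-1/2}$ do not cancel) and that it keeps a constant sign for all sufficiently large $t$, so that Lemma \ref{bound for exponential sums} applies on each dyadic block with the stated $\rho$. One must also be a little careful to separate out the bounded initial segment of $n$ where the asymptotics and sign of $f''$ are not yet controlled; this contributes only $O(1)$ to the sum and is harmless. Everything else is a routine dyadic decomposition.
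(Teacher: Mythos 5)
Your proposal is correct and follows essentially the same route as the paper: Weyl's Criterion (Lemma \ref{weyl criterion}) combined with the second-derivative test of Lemma \ref{bound for exponential sums} applied to $h(t)=\sqrt{P_t}$, whose second derivative is indeed of exact order $t^{-1/2}$ (the two competing terms contribute $\sqrt{3}\,t^{-1/2}$ and $-\tfrac{3\sqrt{3}}{4}t^{-1/2}$, leaving $\tfrac{\sqrt{3}}{4}t^{-1/2}$, so no cancellation). The only difference is that you split $[1,N]$ into dyadic blocks, picking up a harmless $\log N$, whereas the paper applies the lemma once on all of $[1,N]$ with $\rho=h''(N)$ using the monotonicity of $h''$; both give $o(N)$.
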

\begin{proof}
Let $h(x) = \sqrt{P_x}$. It is clear that 
\begin{align*}
    h'(x) &= \frac{2x(x+1)+(2x+1)(x+1)+x(2x+1)}{2\sqrt{6}\sqrt{x(x+1)(2x+1)}}, \\
    h''(x) &= \frac{8x+4(x+1)+2}{2\sqrt{6}\sqrt{x(x+1)(2x+1)}}-\frac{(2x(x+1)+(2x+1)(x+1)+x(2x+1))^2}{4\sqrt{6}(x(x+1)(2x+1))^{3/2}}.
\end{align*} 
Let $a \in \Z\setminus\{0\}$. Since $h''$ is monotonically decreasing, take $\rho = h''(N)$. Then Lemma \ref{bound for exponential sums} gives us 
\begin{align*}
     \lim_{N \to \infty}\frac{1}{N} \bigg| \sum_{n = 1}^{N} e(ah(n)) \bigg| &\leq\  \lim_{N\to\infty}\frac{1}{N}(|ah'(N)-ah'(1)| + 2)\bigg(\frac{4}{\sqrt{ah''(N)}}+3\bigg) \\
    &= 0.
\end{align*}
Upon using Lemma \ref{weyl criterion}, we see that $\sqrt{P_{n}}$ is uniformly distributed. Therefore, the sequence $(\sqrt{P_{n}})$ is uniformly distributed $\bmod \hspace{0.1cm} 1,$ that is, the sequence of the fractional parts, $\{ \sqrt{P_{n}} \}$, is uniformly distributed in the interval $[0,1]$.
\end{proof}
\begin{prop}\label{sqrt pn - yn uniform distribution}
    The sequence $(a_n)$, defined as $|\pn - y_{n}|$, is uniformly distributed in the interval $[0, 1/2]$. 
\end{prop}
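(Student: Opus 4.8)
The plan is to reduce the statement to Proposition \ref{sqrt pn u.d} by recognizing that $a_n$ equals $\|\sqrt{P_n}\|$, the distance from $\sqrt{P_n}$ to its nearest integer. The first step is to confirm that the integer $y_n$, for which $y_n^2$ is the square closest to $P_n$, is precisely the nearest integer to $\sqrt{P_n}$. Writing $m = \lfloor \sqrt{P_n}\rfloor$, so that $m^2 \le P_n < (m+1)^2$, the square $m^2$ is at least as close to $P_n$ as $(m+1)^2$ exactly when $P_n - m^2 \le (m+1)^2 - P_n$, that is, when $P_n \le m^2 + m + \tfrac12$; since $P_n$ is an integer this is equivalent to $P_n \le m^2+m$, hence to $\sqrt{P_n} < m + \tfrac12$, which is also the condition for $m$ to be the nearest integer to $\sqrt{P_n}$. (No ties arise, since $P_n = m^2 + m + \tfrac14$ has no solution in integers.) Consequently $a_n = |\sqrt{P_n} - y_n| = \|\sqrt{P_n}\| = \min\!\big(\{\sqrt{P_n}\},\, 1 - \{\sqrt{P_n}\}\big)$ for every $n$.

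Next I would pass to distribution functions. Fix $c$ with $0 \le c \le \tfrac12$. From the displayed formula for $a_n$, the inequality $a_n \le c$ holds if and only if $\{\sqrt{P_n}\} \in [0,c] \cup [1-c,1)$, so
\[
\frac{1}{N}\#\{\,1 \le n \le N : a_n \le c\,\} \;=\; \frac{1}{N}\#\big\{\,1 \le n \le N : \{\sqrt{P_n}\} \in [0,c]\cup[1-c,1)\,\big\}.
\]
By Proposition \ref{sqrt pn u.d} the sequence $\{\sqrt{P_n}\}$ is uniformly distributed in $[0,1]$, so the right-hand side tends to $c + c = 2c$ as $N \to \infty$. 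Since the uniform probability measure on $[0,\tfrac12]$ has distribution function $F(c) = 2c$ for $c \in [0,\tfrac12]$, and $c$ was arbitrary in that range, it follows that $(a_n)$ is uniformly distributed in $[0,\tfrac12]$.

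The argument is essentially immediate once this reduction is in place; the only point requiring genuine (if elementary) care is the verification that the "closest square" convention coincides with "nearest integer to $\sqrt{P_n}$". Equivalently, one may phrase the conclusion as saying that $(a_n)$ is equidistributed with respect to the pushforward of Lebesgue measure on $[0,1]$ under the continuous map $u \mapsto \min(u,1-u)$, which is precisely the uniform measure on $[0,\tfrac12]$; this is a formal consequence of Proposition \ref{sqrt pn u.d}. A Weyl-sum variant, expanding $e(2ka_n)$ through $e(\pm 2k\sqrt{P_n})$ on the two halves of the unit interval and invoking Lemma \ref{bound for exponential sums}, is also possible but less transparent than the distribution-function formulation above.
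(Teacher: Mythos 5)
Your proof is correct and follows the same overall route as the paper: identify $|\sqrt{P_n}-y_n|$ with $\|\sqrt{P_n}\|$ and then transfer the uniform distribution of $\{\sqrt{P_n}\}$ (Proposition \ref{sqrt pn u.d}) through the folding map $u\mapsto\min(u,1-u)$, computing the limiting distribution function $2c$ on $[0,1/2]$. The one genuine difference is in how that identification is justified. The paper does not assert that the identity holds for every $n$: it introduces an exceptional set $E(x)$ of indices at which the closest square might fail to be the square of the nearest integer, shows that any such $n$ forces $\{\sqrt{P_n}\}$ to lie within $x^{-3/4}$ of $\tfrac12$, and then uses equidistribution again to conclude $|E(x)|=o(x)$. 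You instead observe that, since $P_n$ is an integer and $m=\lfloor\sqrt{P_n}\rfloor$, the conditions ``$m^2$ is the closest square to $P_n$'' and ``$m$ is the nearest integer to $\sqrt{P_n}$'' are both equivalent to $P_n\le m^2+m$, and that no ties occur; hence $|\sqrt{P_n}-y_n|=\|\sqrt{P_n}\|$ holds exactly and the exceptional set is empty. This integrality argument is correct and is a genuine (if small) simplification: it removes the need for the density-zero estimate here, and would also eliminate the corresponding exceptional-set error term that reappears in the proof of Theorem \ref{Moments theorem}. Both arguments then conclude identically.
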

\begin{proof}
    Note that $(\lfloor \sqrt{P_n}\rfloor)^2\leq P_n\leq(\lceil\sqrt{P_n}\rceil)^2$, and since $\lceil\sqrt{P_n}\rceil-\lfloor \sqrt{P_n}\rfloor\leq 1$, it follows that $y_{n}$ is either $\lfloor \sqrt{P_{n}} \rfloor$ or $\lceil \sqrt{P_{n}} \rceil$. Therefore, $|\pn - y_{n}|$ is at most $1/2$.
    
    Consider the following set
  \begin{align}\label{exception set}
        E(x) = \left\{ n \colon 1 \leq n \leq x, |\sqrt{P_n}-y_n| \neq \|\sqrt{P_n}\|\right\},
    \end{align}
    which we refer to as the exceptional set depending on $x$. For all $n \in E(x)$, the number $n$ must satisfy one of the two following cases:
    
    \textbf{Case 1:} $y_n=\lfloor \sqrt{P_n}\rfloor$ and $\|\sqrt{P_n}\| = |\sqrt{P_n} -(y_n+1)| < |\sqrt{P_n}-y_n|$.
    
    By definition of $y_n$ in \eqref{a_n first definition}, we have   
    \begin{align}\label{case 1 step 1}
        |P_n-y_n^2| &< |P_n-(y_n+1)^2|.
    \end{align}
    Since $y_n=\lfloor \sqrt{P_n}\rfloor$, \eqref{case 1 step 1} is equivalent to
        \begin{align*}
        P_n-y_n^2 &< (y_n+1)^2-P_n, \end{align*}
        so $\sqrt{P_n} < \sqrt{y_n^2 + y_n +1/2}.$ Moreover, under the conditions of Case 1, we also have $\sqrt{P_n} > y_n + \frac{1}{2}$. These two inequalities yield 
        \begin{align*}
        y_n + \frac{1}{2} < \sqrt{P_n} <& \ \sqrt{y_n^2 + y_n + 1/2} \\
        <& \bigg(y_n + \frac{1}{2}\bigg)\bigg(1 + \frac{1}{8(y_n + \frac{1}{2})^2}\bigg).
        \end{align*}
        Therefore,
        \begin{align}
        \frac{1}{2} < \{\sqrt{P_n}\} < \ \frac{1}{2} + \frac{1}{\sqrt{P_n}}.
        \end{align}
        \textbf{Case 2:} $y_n=\lceil \sqrt{P_n}\rceil, \ \|\sqrt{P_n}\| = |\sqrt{P_n} - (y_n-1)| < |\sqrt{P_n}-y_n|$.
        
        Through arguments analogous to those in Case 1, we obtain the bounds:
        \begin{align*}
            \sqrt{y_n^2 - y_n + 1/2}<\sqrt{P_n}<y_n-\frac{1}{2},
        \end{align*}
        which again leads to
        \begin{align}\label{root Pn upper lower bound}
            \frac{1}{2} - \frac{1}{\sqrt{P_n}}< \{\sqrt{P_n}\} < \frac{1}{2}.
        \end{align}
        To bound $|E(x)|$, we first break the interval $[1,x]$ into dyadic pieces, and bound the number of $n$ in $[1,\sqrt{x}]$ trivially by $O(\sqrt{x})$. Now, for $\sqrt{x}<n<x$, using the closed form of $P_{n}$, we obtain
\begin{align}
     \sqrt{P_{n}}
     &= 2\sqrt{\frac{n\bigl(n+1\bigr)\bigl(2n+1\bigr)}{6}}  \notag\\ 
     &= \frac{1}{\sqrt{3}} n^{3/2} \sqrt{\bigg(1 + \frac{1}{n}\bigg)\bigg( 1 + \frac{1}{2n}\bigg)}  \notag\\
    &=\frac{1}{\sqrt{3}}n^{3/2}+\frac{\sqrt{3}}{4}n^{1/2}+O\bigl( n^{-1/2}\bigr).\label{closed form of P_n}
\end{align}
Combining \eqref{root Pn upper lower bound} and \eqref{closed form of P_n}, we achieve that 
\[
\bigg\lvert\{\sqrt{P_n}\} - \frac{1}{2}\bigg\rvert\leq x^{-3/4},
\]
for sufficiently large $x$. Thus, 
\begin{align}
    \lvert E(x)\rvert = \#\left\{ n \colon 1 \leq n \leq x, \left\lvert\{\sqrt{P_n}\} - \frac{1}{2}\right\rvert\leq x^{-3/4}\right\}+O(\sqrt{x}).\label{size of exceptional set}
\end{align}
Taking into account all of the above, for all $0\leq\alpha<\beta\leq 1/2$, we have
\begin{align*}
    &\lim_{x\rightarrow\infty} \frac{\#\{n \colon \|\sqrt{P_n}\|\in[\alpha,\beta]\}}{x}-\lim_{x\rightarrow\infty} \frac{|E(x)|}{x}\leq\lim_{x\rightarrow\infty} \frac{\#\{n \colon 1 \leq n \leq x, |\sqrt{P_n}-y_n|\in[\alpha,\beta]\}}{x}
\end{align*}
and
\begin{align*}
    \lim_{x\rightarrow\infty} \frac{\#\{n \colon 1 \leq n \leq x, |\sqrt{P_n}-y_n|\in[\alpha,\beta]\}}{x}\leq\lim_{x\rightarrow\infty} \frac{\#\{n \colon \|\sqrt{P_n}\|\in[\alpha,\beta]\}}{x}+\lim_{x\rightarrow\infty} \frac{|E(x)|}{x}.
\end{align*}
Applying Proposition \ref{sqrt pn u.d}, 
\begin{align*}
    \lim_{x\rightarrow\infty} \frac{\#\{n \colon \|\sqrt{P_n}\|\in[\alpha,\beta]\}}{x}&=\lim_{x\rightarrow\infty} \frac{\#\{n \colon \{\sqrt{P_n}\}\in[\alpha,\beta]\cup[1-\beta,1-\alpha]\}}{x}\notag\\
    &=2(\beta-\alpha).
\end{align*}
Moreover,
\begin{align*}
    \lim_{x\rightarrow\infty} \frac{|E(x)|}{x}&= \lim_{x\rightarrow\infty} \frac{\#\{n \colon \{\sqrt{P_n}\}\in[\frac{1}{2}-x^{-3/4},\frac{1}{2}+x^{-3/4}]\}}{x}+x^{-1/2}\\
    &=\lim_{x\rightarrow\infty} 2x^{-3/4}+x^{-1/2}=0.
\end{align*}
Thus,
\[
\lim_{x\rightarrow\infty} \frac{\#\{n \colon 1 \leq n \leq x, |\sqrt{P_n}-y_n|\in[\alpha,\beta]\}}{x} = 2(\beta-\alpha),
\]
which, by the definition of equidistribution, completes the proof of Proposition \ref{sqrt pn - yn uniform distribution}.
\end{proof}

\section{Proof of Theorems \ref{asymptotic for A(x) theorem} and \ref{Moments theorem}}\label{sec: proofs}
In this section, we provide proofs for Theorems \ref{asymptotic for A(x) theorem} and \ref{Moments theorem}. Before proving the theorems, we first introduce two useful lemmas. The following lemma is a modified version of an auxiliary lemma by Das, Robles, Dirk, and the fourth author \cite{semiprimes}. The proof is essentially the same as the original one, so we omit it here.
\begin{lem}[{\cite[Lemma~5.1]{semiprimes}}] 
\label{lem:min_max_for_minor}
Let $i\in\N$, and $F, G_1,G_2,\cdots,G_i$ be continuous, real-valued functions on $\R_+$ such that $F$ is strictly decreasing and $G_i$'s are increasing. 
Further, suppose that 
\begin{align}
\lim_{x\to\infty} F(x) = \lim_{x\to 0} G_i(x) = 0
\ \text{ and } \
\lim_{x\to0} F(x) = \lim_{x\to\infty } G_i(x) = \infty.
\end{align} 
Set $G(x):=\max\{G_1(x),\cdots, G_i(x)\}$ and $H(x):=\max\{F(x),G(x)\}$.
We then have
\begin{align}
\min_{x\in(0,\infty)} H(x) 
=
F(\min\{M_1,\cdots,M_i\})
=
G(\min\{M_1,\cdots,M_i\}),
\label{eq:lem:min_max_for_minor}
\end{align}
where $M_j$ is the solution of the equation $F(M_j) = G_j(M_j)$ for $j=1,2,\cdots,i$.\\
\end{lem}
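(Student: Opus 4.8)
The plan is to reduce the whole statement to the behavior of a single crossing point. First, for each $j$ I would observe that $F-G_j$ is continuous and strictly decreasing on $\R_+$, being the difference of a strictly decreasing function and an increasing one, and that the hypotheses give $\lim_{x\to 0^+}(F-G_j)(x)=+\infty$ and $\lim_{x\to\infty}(F-G_j)(x)=-\infty$. By the intermediate value theorem it therefore has exactly one zero, namely $M_j$, so each $M_j$ is well defined. Applying the same argument to $G=\max\{G_1,\dots,G_i\}$ — which is again continuous and increasing, with $\lim_{x\to 0^+}G(x)=0$ and $\lim_{x\to\infty}G(x)=\infty$ — produces a unique point $M$ with $F(M)=G(M)$, and moreover $F(x)>G(x)$ for $x<M$ and $F(x)<G(x)$ for $x>M$.

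The key step is then to identify $M$ with $\min\{M_1,\dots,M_i\}$. At $x=M$ we have $F(M)=G(M)=\max_j G_j(M)$, so there is an index $j_0$ with $F(M)=G_{j_0}(M)$ and $F(M)\geq G_k(M)$ for every $k$. Uniqueness of the zero of $F-G_{j_0}$ forces $M=M_{j_0}$. For any other $k$, the inequality $(F-G_k)(M)\geq 0$ together with the fact that $F-G_k$ is strictly decreasing with unique zero $M_k$ yields $M\leq M_k$. Hence $M=M_{j_0}=\min_j M_j$, and also $G(M)=\max_j G_j(M)=G_{j_0}(M)=F(M)$, so $F(\min_j M_j)=G(\min_j M_j)$.

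Finally I would read off the minimum of $H=\max\{F,G\}$. On $(0,M]$ we have $F\geq G$, hence $H=F$, which is strictly decreasing there; on $[M,\infty)$ we have $G\geq F$, hence $H=G$, which is increasing there. Thus $H$ is continuous, strictly decreasing up to $M$ and nondecreasing afterwards, so it attains its global minimum over $(0,\infty)$ exactly at $x=M$, with value $H(M)=F(M)=G(M)$. Combining this with the previous paragraph gives $\min_{x\in(0,\infty)}H(x)=F(\min_j M_j)=G(\min_j M_j)$, which is \eqref{eq:lem:min_max_for_minor}.

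The only genuinely substantive point is the middle step — that the crossing point of $F$ with the pointwise maximum of the $G_j$ coincides with the smallest of the individual crossing points $M_j$; the rest is monotonicity bookkeeping and an application of the intermediate value theorem. Since the paper notes that the argument mirrors that of \cite[Lemma~5.1]{semiprimes}, I would keep the exposition at roughly this level of detail.
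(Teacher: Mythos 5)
Your argument is correct: the reduction to the unique crossing point of $F$ with $G=\max_j G_j$, the identification of that point with $\min_j M_j$ via the sign of $F-G_k$ at $M$, and the final monotonicity bookkeeping for $H$ are all sound. The paper itself omits the proof, deferring to \cite[Lemma~5.1]{semiprimes}, and your write-up is essentially the standard argument used there, so there is nothing to flag.
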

\begin{defn}\label{Discrepancy def}
Let $(u_n)$ be a sequence of points in the circle group $\mathbb{T} = \R/\Z$.  For $0\leq \alpha\leq1$, 
\[Z(N;\alpha) = \#\{n\in \mathbb{Z} : 1 \leq n \leq N, \ 0\leq u_n\leq \alpha \bmod 1\} 
\]
Let $D(N;\alpha) = Z(N;\alpha)-N\alpha$.  Then the \textbf{discrepancy} of the sequence is 
\[
D(N) = \sup_{\alpha \in [0,1]} |D(N;\alpha)|.
\]
\end{defn}
\begin{lem}
[Erd\"{o}s-Tur\'{a}n inequality, {\cite[Coro.\ 1.1, p.\ 8]{montgomery1994ten}}]\label{erdos turan inequality}
Let $(x_n)_{n\leq N}$ be a finite sequence with discrepancy $D(N)$. Then, for any positive integer $K$, 

\[
D(N) \leq \frac{N}{K + 1} + 3\sum_{m=1}^K\frac{1}{m} \bigg\lvert\sum_{n\leq N} e(mx_n)\bigg\rvert.
\]
\end{lem}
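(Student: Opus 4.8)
The plan is to prove the inequality by sandwiching the indicator function of the interval $[0,\alpha]$ on the circle $\mathbb{T}=\R/\Z$ between two trigonometric polynomials of degree at most $K$ whose Fourier coefficients can be controlled, and then to evaluate both bounds at the points $x_1,\dots,x_N$. Writing $\chi_\alpha$ for the indicator of $[0,\alpha]\bmod 1$, we have $Z(N;\alpha)=\sum_{n\le N}\chi_\alpha(x_n)$, so if $S^-\le\chi_\alpha\le S^+$ pointwise with $S^\pm$ trigonometric polynomials of degree $\le K$, then
\[
\sum_{n\le N}S^-(x_n)\le Z(N;\alpha)\le\sum_{n\le N}S^+(x_n).
\]
Expanding $S^\pm(x)=\sum_{|m|\le K}\widehat{S^\pm}(m)e(mx)$ and interchanging the order of summation gives
\[
\sum_{n\le N}S^\pm(x_n)=N\,\widehat{S^\pm}(0)+\sum_{0<|m|\le K}\widehat{S^\pm}(m)\,S(m),\qquad S(m):=\sum_{n\le N}e(mx_n),
\]
which already separates the main term $N\,\widehat{S^\pm}(0)$ from an oscillatory part governed by the very exponential sums $S(m)$ that appear in the claimed bound.

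The core of the argument is the construction of $S^\pm$ with the right properties; this is the Beurling--Selberg extremal-function machinery developed in the cited source. First I would recall Beurling's function $B(x)$, a real entire function of exponential type that majorizes $\operatorname{sgn}(x)$, whose Fourier transform is supported in $[-1,1]$, and which satisfies $\int_{\R}\bigl(B(x)-\operatorname{sgn}(x)\bigr)\,dx=1$. Since $\chi_{[a,b]}(x)=\tfrac12\bigl(\operatorname{sgn}(x-a)+\operatorname{sgn}(b-x)\bigr)$, rescaling by the bandwidth $K+1$ and forming $\tfrac12\bigl(B((K+1)(x-a))+B((K+1)(b-x))\bigr)$ yields a majorant of $\chi_{[a,b]}$ of exponential type $2\pi(K+1)$ whose excess integral is exactly $\tfrac{1}{K+1}$; the Selberg minorant is built analogously from $-B(-x)$. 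Periodizing over $\Z$ converts these into trigonometric polynomials $S^\pm$ of degree at most $K$ with
\[
\widehat{S^\pm}(0)=\alpha\pm\frac{1}{K+1},
\]
so that $N\,\widehat{S^\pm}(0)=N\alpha\pm\tfrac{N}{K+1}$, accounting for the first term in the stated bound.

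It remains to bound the nonzero coefficients. The explicit Fourier coefficients of the periodized Beurling--Selberg functions satisfy $\bigl|\widehat{S^\pm}(m)\bigr|\le\tfrac{1}{\pi|m|}+\tfrac{1}{K+1}$ for $1\le|m|\le K$. Pairing the terms $m$ and $-m$, and using $|S(-m)|=|S(m)|$, the oscillatory part is at most $\sum_{m=1}^{K}2\bigl(\tfrac{1}{\pi m}+\tfrac{1}{K+1}\bigr)|S(m)|$. Because $m\le K<K+1$ forces $\tfrac{1}{K+1}\le\tfrac1m$, this is bounded by $\bigl(\tfrac{2}{\pi}+2\bigr)\sum_{m=1}^{K}\tfrac1m|S(m)|\le 3\sum_{m=1}^{K}\tfrac1m|S(m)|$. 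Combining the upper and lower sandwiches then gives
\[
\bigl|Z(N;\alpha)-N\alpha\bigr|\le\frac{N}{K+1}+3\sum_{m=1}^{K}\frac1m\Bigl|\sum_{n\le N}e(mx_n)\Bigr|,
\]
and taking the supremum over $\alpha\in[0,1]$ produces the asserted bound for $D(N)$.

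The main obstacle lies entirely in the extremal-function step: producing trigonometric polynomials that majorize and minorize the sharp-cornered indicator $\chi_\alpha$ while simultaneously having degree $\le K$, zeroth coefficient within $\tfrac{1}{K+1}$ of $\alpha$, and higher coefficients decaying like $1/m$ with the absolute numerical constant $\tfrac2\pi+2\le 3$. All three constraints are met by the Beurling--Selberg--Vaaler construction, but verifying the explicit Fourier-coefficient estimates---rather than the mere existence of a majorant---is the delicate harmonic-analytic heart of the matter; the remaining steps are bookkeeping. Since the statement is quoted verbatim from \cite{montgomery1994ten}, in practice I would cite that development rather than reproduce the extremal-function estimates in full.
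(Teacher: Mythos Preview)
The paper does not supply a proof of this lemma at all: it is stated as a quotation of \cite[Coro.\ 1.1, p.\ 8]{montgomery1994ten} and used as a black box, so there is no ``paper's own proof'' to compare against. Your sketch is the standard Beurling--Selberg majorant/minorant argument, which is precisely the route taken in the cited chapter of Montgomery; the outline is correct, and the numerical check $2/\pi+2<3$ for the coefficient bound is the right way to recover the constant $3$. Since the paper treats the lemma as a citation, your final remark---that one would simply invoke \cite{montgomery1994ten} rather than redevelop the extremal-function estimates---is exactly what the authors do.
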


Since the result of Theorem \ref{asymptotic for A(x) theorem} follows immediately from Theorem \ref{Moments theorem} by taking $k=1$, we begin with the proof of Theorem \ref{Moments theorem}.
\begin{proof}[Proof of Theorem \ref{Moments theorem}]
    We write
\begin{align}\label{factored A(x)}
    a_{n} = \big\lvert \sqrt{P_{n}} - y_{n}\big\rvert \big\lvert \sqrt{P_{n}} + y_{n} \big\rvert.
\end{align}
 Then, 
    \begin{align*}
        M_{k}(x) = \sum_{1\leq n\leq x}a_n^k = \sum_{1 \leq n \leq x}|\sqrt{P_n}-y_n|^k|\sqrt{P_n}+y_n|^k.
    \end{align*}
By the discussion in the proof of Proposition \ref{sqrt pn - yn uniform distribution}, $y_{n}$ is either $\lfloor \sqrt{P_{n}} \rfloor$ or $\lceil \sqrt{P_{n}} \rceil$, so
\begin{align}\label{fractional part asymptotic}
    y_{n} = \sqrt{P_{n}} + O(1),
\end{align}
and thus $y_{n} + \sqrt{P_{n}} = 2 \sqrt{P_{n}} + O(1)$. Moreover,
\begin{align}
    \bigl(\sqrt{P_n}+y_n\bigr)^k &= \bigl(2\sqrt{P_n}+O(1)\bigr)^k\notag \\
    &= {\binom{k}{0}} \bigl( 2 \sqrt{P_{n}} \bigr)^{k} + O\bigg({\binom{k}{1}} \bigl( 2 \sqrt{P_{n}} \bigr)^{k-1} \bigg)\notag \\
    &=  \bigl(2\pn \bigr)^{k} + O_k\bigl( n^{\frac{3}{2}(k-1)} \bigr).\label{preliminary sqrt pn + yn kth power asymptotic}
\end{align}
Upon using the closed form of $\sqrt{P_{n}}$ in \eqref{closed form of P_n}, we have 
\begin{align}
    \bigl(  \pn + y_{n} \bigr)^{k} &= \bigg(2 \sqrt{\frac{n(n+1)(2n+1)}{6}} \bigg)^{k} + O_k\bigl( n^{\frac{3}{2}(k-1)} \bigr) \notag\\
    &= \bigg(\frac{2}{\sqrt{3}} n^{3/2} \sqrt{\bigg(1 + \frac{1}{n}\bigg)\bigg( 1 + \frac{1}{2n}\bigg)} \bigg)^{k} + O_k\bigl( n^{\frac{3}{2}(k-1)} \bigr) \notag \\
    &= \bigg( \frac{2}{\sqrt{3}} \bigg)^{k} n^{\frac{3}{2}k} \bigg(1 + O\bigg(\frac{1}{n}\bigg) \bigg)^{k} + O_k\bigl( n^{\frac{3}{2}(k-1)} \bigr) \notag \\
    &= \bigg( \frac{2}{\sqrt{3}} \bigg)^{k} n^{\frac{3}{2}k}  + O_k\bigl( n^{\frac{3}{2}k-1} \bigr). \label{sqrt pn + yn kth power asymptotic}
\end{align}
Using \eqref{sqrt pn + yn kth power asymptotic}, upon trivially bounding $|\sqrt{P_n}-y_n|\leq 1$ in the error term, we now have that 
\begin{align*}
    M_k(x) = \sum_{1\leq n\leq x}|\sqrt{P_n}-y_n|^k\bigg( \frac{2}{\sqrt{3}} \bigg)^{k} n^{\frac{3}{2}k}  + O_k\bigl( x^{\frac{3}{2}k} \bigr).
\end{align*}
 By Proposition \ref{sqrt pn - yn uniform distribution}, choose an even $L\in \N$, which will be optimized later, we have
\begin{align*}
    M_k(x) &= \bigg( \frac{2}{\sqrt{3}} \bigg)^{k}\sum_{j=1}^{L/2} \sum_{\substack{1\leq n\leq x\\\frac{j-1}{L}<|\sqrt{P_n}-y_n|\leq\frac{j}{L}}}|\sqrt{P_n}-y_n|^k n^{\frac{3}{2}k} + O_k\bigl( x^{\frac{3}{2}k} \bigr).
\end{align*}
Note that 
\begin{align}\label{sandwich}
U_k(x,L) \leq M_k(x)\leq V_k(x,L),
\end{align}
where
\begin{align}\label{Uk(x,L)}
    U_k(x,L) = \bigg( \frac{2}{\sqrt{3}} \bigg)^{k}\sum_{j=1}^{L/2}\bigg(\frac{j-1}{L}\bigg)^k \sum_{\substack{1\leq n\leq x\\\frac{j-1}{L}<|\sqrt{P_n}-y_n|\leq\frac{j}{L}}} n^{\frac{3}{2}k} + O_k\bigl( x^{\frac{3}{2}k} \bigr),
\end{align}
and
\begin{align}\label{Vk(x,L)}
    V_k(x,L) = \bigg( \frac{2}{\sqrt{3}} \bigg)^{k}\sum_{j=1}^{L/2} \bigg(\frac{j}{L}\bigg)^k\sum_{\substack{1\leq n\leq x\\\frac{j-1}{L}<|\sqrt{P_n}-y_n|\leq\frac{j}{L}}} n^{\frac{3}{2}k} + O_k\bigl( x^{\frac{3}{2}k} \bigr).
\end{align}
We remark that one can obtain the main term of $U_k(x,L)$ and $V_k(x,L)$ at this point by use of the fact that $|\sqrt{P_n}-y_n|$ is equidistributed in $[0,1/2]$ from Proposition \ref{sqrt pn - yn uniform distribution}. However, we want a more accurate error term, so we proceed as follows. To evaluate $V_k(x,L)$, we focus on the inner sum on the right side of \eqref{Vk(x,L)}. Let $M\leq x$ be in $\N$, and define 
\begin{align}\label{S j}
S_j := \sum_{\substack{1\leq n\leq x\\\frac{j-1}{L}<|\sqrt{P_n}-y_n|\leq\frac{j}{L}}} n^{\frac{3}{2}k}  = \sum_{0\leq \ell \leq M-1}S_{j,\ell, M}
\end{align}
where 
\begin{align}\label{S(j,ell)}
    S_{j,\ell, M} := \sum_{\substack{\frac{\ell x}{M} \leq n < \frac{(\ell+1)x}{M}\\\frac{j-1}{L} \leq |\sqrt{P_n}-y_n| < \frac{j}{L}}}n^{\frac{3}{2}k}.
\end{align}
We therefore have the bounds
\begin{align*}
    \bigg(\frac{\ell x}{M}\bigg)^{\frac{3k}{2}} &\leq n^{\frac{3k}{2}} \leq \bigg(\frac{(\ell +1)x}{M}\bigg)^{\frac{3k}{2}} .
\end{align*}
Note that
\begin{align*}
    \bigg( \frac{(\ell + 1)x}{M}\bigg)^{\frac{3k}{2}} &= \bigg(\frac{\ell x}{M}\bigg)^{\frac{3k}{2}}\bigg(\frac{\ell + 1}{\ell}\bigg)^{\frac{3k}{2}} \\
    &= \bigg(\frac{\ell x}{M}\bigg)^{\frac{3k}{2}}\bigg(1+O_k\bigg(\frac{1}{\ell}\bigg)\bigg) \\
    &= \bigg(\frac{\ell x}{M}\bigg)^{\frac{3k}{2}} + O_k\bigg( \bigg(\frac{\ell x}{M}\bigg)^{\frac{3k}{2}}\frac{1}{\ell}\bigg). 
\end{align*}
Hence,
\begin{align}
    n^{\frac{3k}{2}} = \bigg(\frac{\ell x}{M}\bigg)^{\frac{3k}{2}} + O_k\bigg( \frac{\ell^{\frac{3k}{2}-1}x^{\frac{3k}{2}}}{M^{\frac{3k}{2}}}\bigg).
\end{align}
In summary, \eqref{S(j,ell)} can be rewritten as 
\begin{align}
    S_{j,\ell, M} &= \sum_{\substack{\frac{\ell x}{M} \leq n < \frac{(\ell +1)x}{M}\\\frac{j-1}{L} < |\sqrt{P_n} - y_n| \leq \frac{j}{L}}} \left(\bigg(\frac{\ell x}{M}\bigg)^{\frac{3k}{2}} +O\bigg(\frac{\ell^{\frac{3k}{2}-1}x^{\frac{3k}{2}}}{M^{\frac{3k}{2}}}\bigg)\right)\notag\\
    &= \sum_{\substack{\frac{\ell x}{M} \leq n < \frac{(\ell +1)x}{M}\\\frac{j-1}{L} < \{\sqrt{P_n}\} \leq \frac{j}{L}}} \bigg(\frac{\ell x}{M}\bigg)^{\frac{3k}{2}} + \sum_{\substack{\frac{\ell x}{M} \leq n < \frac{(\ell +1)x}{M}\\1-\frac{j}{L} < \{\sqrt{P_n}\} \leq 1-\frac{j-1}{L}}} \bigg(\frac{\ell x}{M}\bigg)^{\frac{3k}{2}} \notag\\
    &\quad+O\bigg(\sum_{\substack{\frac{\ell x}{M} \leq n < \frac{(\ell +1)x}{M}\\ \frac{1}{2}-x^{-\frac{3}{4}} < \{\sqrt{P_n}\} \leq \frac{1}{2}+x^{-\frac{3}{4}}}} \bigg(\frac{\ell x}{M}\bigg)^{\frac{3k}{2}} +\sum_{\substack{\frac{\ell x}{M} \leq n < \frac{(\ell +1)x}{M}\\\frac{j-1}{L} < |\sqrt{P_n} - y_n| \leq \frac{j}{L}}}\frac{\ell^{\frac{3k}{2}-1}x^{\frac{3k}{2}}}{M^{\frac{3k}{2}}}\bigg),\label{split Sjl sum}
\end{align}
where the second equality follows from \eqref{size of exceptional set}. We now focus on the first sum in \eqref{split Sjl sum}. 
\begin{align*}
    \sum_{\substack{\frac{\ell x}{M} \leq n < \frac{(\ell +1)x}{M}\\\frac{j-1}{L} < \{\sqrt{P_n}\} \leq \frac{j}{L}}} \bigg(\frac{\ell x}{M}\bigg)^{\frac{3k}{2}} &= \left(\frac{\ell x}{M}\right)^{\frac{3k}{2}} \# \left\{n \in \left[\frac{\ell x}{M}, \frac{(\ell + 1) x}{M}\right] \colon \{\sqrt{P_n}\} \in \left(\frac{j-1}{L}, \frac{j}{L}\right) \right\}\\
    &= \left(\frac{\ell x}{M}\right)^{\frac{3k}{2}} \left(\frac{x}{LM}+O(D(\mathcal{U}_{\ell,M}))\right),
\end{align*}
where
\begin{align}
    \mathcal{U}_{\ell,M} = \left\{ \{\sqrt{P_n}\} \colon n = \left\lfloor \frac{\ell x}{M}\right\rfloor + i, \ 1 \leq i \leq \left\lfloor\frac{x}{M}\right\rfloor\right\} \label{Family U definition}
\end{align}
and $D(\mathcal{U}_{\ell,M})$ represents the discrepancy of the family $\mathcal{U}_{\ell,M}$. Applying the same arguments to the second sum and error term in \eqref{split Sjl sum}, we have 
\begin{align}
    S_{j,\ell, M} &=\left(\frac{\ell x}{M}\right)^{\frac{3k}{2}} \cdot\frac{2x}{LM}+O\left(\left(\frac{\ell x}{M}\right)^{\frac{3k}{2}}D(\mathcal{U}_{\ell,M})+ \frac{\ell^{\frac{3k}{2}}x^{\frac{3k}{2} + \frac{1}{4}}}{M^{\frac{3k}{2}+1}} + \frac{\ell^{\frac{3k}{2} -1} x^{\frac{3k}{2} +1}}{LM^{\frac{3k}{2}+1}}\right).\label{split Sjl sum form 2}
\end{align}
To bound $D(\mathcal{U}_{\ell,M})$, we apply Lemma \ref{erdos turan inequality}. Then, for any $K \geq 1$,

\begin{align}
    D(\mathcal{U}_{\ell,M}) \leq \left\lfloor \frac{x}{M} \right\rfloor \frac{1}{K+1} + 3\sum_{m=1}^K\frac{1}{m}\bigg|\sum_{1 \leq i \leq \left\lfloor \frac{x}{M}\right\rfloor} e(m\sqrt{P_i})\bigg|. \label{erdos turan application}
\end{align}
Now our focus is to obtain an upper bound for the exponential sum in \eqref{erdos turan application}, where
\begin{align}
    \sum_{1\leq i \leq \left\lfloor\frac{x}{M}\right\rfloor} e(m\sqrt{P_i}) &= \sum_{\left\lfloor\frac{\ell x}{M}\right\rfloor \leq n \leq \left\lfloor \frac{(\ell+1)x}{M}\right\rfloor} e(m\sqrt{P_n}).
\end{align}
By Proposition \ref{sqrt pn u.d}, we obtain 
\begin{align*}
    &\bigg|\sum_{\left\lfloor \frac{\ell x}{M} \right\rfloor \leq n \leq \left\lfloor \frac{(\ell + 1) x}{M} \right\rfloor} e(m\sqrt{P_n})\bigg| \\
    &\leq \left(m\left|h'\left(\left\lfloor \frac{(\ell +1)x}{M} \right\rfloor \right)-h'\left(\left\lfloor \frac{\ell x}{M}\right\rfloor\right)\right| + 2\right)\bigg(4\left(mh''\left(\left\lfloor \frac{(\ell +1)x}{M} \right\rfloor \right)\right)^{-1/2}+3\bigg).
\end{align*}
Using Taylor series expansion, 
\begin{align*}
    \left|h'\left(\left\lfloor \frac{(\ell +1)x}{M} \right\rfloor \right)-h'\left(\left\lfloor \frac{\ell x}{M}\right\rfloor\right)\right| \ll \frac{x}{M}h''\left(\frac{\ell x}{M}\right).
\end{align*}
Thus, 
\begin{align}
    \bigg|\sum_{\left\lfloor \frac{\ell x}{M} \right\rfloor \leq n \leq \left\lfloor \frac{(\ell + 1) x}{M} \right\rfloor} e(m\sqrt{P_n})\bigg| &\ll \left( \frac{mx}{M}h''\left(\frac{\ell x}{M}\right)+ 2\right)\bigg(4\left(mh''\left(\left\lfloor \frac{(\ell +1)x}{M} \right\rfloor \right)\right)^{-1/2}+3\bigg)\notag\\
    &\ll \left( \frac{mx}{M}\cdot\frac{M^{\frac{1}{2}}}{\ell^{\frac{1}{2}}x^{\frac{1}{2}}}+ 2\right)\left(\frac{\ell^{\frac{1}{4}}x^{\frac{1}{4}}}{m^{\frac{1}{2}}M^{\frac{1}{4}}}+3\right)\notag\\
    &\ll \frac{m^{\frac{1}{2}}x^{\frac{3}{4}}}{\ell^{\frac{1}{4}}M^{\frac{3}{4}}}+\frac{mx^{\frac{1}{2}}}{M^{\frac{1}{2}}\ell^{\frac{1}{2}}}+\frac{\ell^{\frac{1}{4}}x^{\frac{1}{4}}}{m^{\frac{1}{2}}M^{\frac{1}{4}}}.\label{discrepancy sub-bound}
\end{align}
Substituting \eqref{discrepancy sub-bound} into \eqref{erdos turan application}, we obtain
\begin{align}
    D(\mathcal{U}_{\ell,M})&\ll \frac{x}{KM}+\sum_{m=1}^K \frac{x^{\frac{3}{4}}}{m^{\frac{1}{2}}\ell^{\frac{1}{4}}M^{\frac{3}{4}}}+\frac{x^{\frac{1}{2}}}{\ell^{\frac{1}{2}}M^{\frac{1}{2}}}+\frac{\ell^{\frac{1}{4}}x^{\frac{1}{4}}}{m^{\frac{3}{2}}M^{\frac{1}{4}}}\notag\\
    &\ll \frac{x}{KM} + \frac{K^{\frac{1}{2}}x^{\frac{3}{4}}}{\ell^{\frac{1}{4}}M^{\frac{3}{4}}}+\frac{Kx^{\frac{1}{2}}}{\ell^{\frac{1}{2}}M^{\frac{1}{2}}}+\frac{\ell^{\frac{1}{4}}x^{\frac{1}{4}}}{K^{\frac{1}{2}}M^{\frac{1}{4}}}.\label{discrepancy bound 2}
\end{align}

Combining \eqref{split Sjl sum form 2} and \eqref{discrepancy bound 2}, we see that 
\begin{align}
    S_{j,\ell, M}  
    &=
\frac{2 \ell^{\frac{3k}{2}} x^{\frac{3k}{2} + 1}}{LM^{\frac{3k}{2}+1}} + O\bigg(\frac{ \ell^{\frac{3k}{2}} x^{\frac{3k}{2} + 1}}{KM^{\frac{3k}{2}+1}} + \frac{ K^{\frac{1}{2}} \ell^{\frac{3k}{2} - \frac{1}{4}} x^{\frac{3k}{2} + \frac{3}{4}}}{M^{\frac{3k}{2} + \frac{3}{4}}} + \frac{K\ell^{\frac{3k}{2} - \frac{1}{2}}x^{\frac{3k}{2} + \frac{1}{2}}}{M^{\frac{3k}{2} + \frac{1}{2}}}+ \frac{  \ell^{\frac{3k}{2} + \frac{1}{4}} x^{\frac{3k}{2} + \frac{1}{4}}}{K^{\frac{1}{2}}M^{\frac{3k}{2} + \frac{1}{4}}}\notag\\
&\quad + \frac{\ell^{\frac{3k}{2}}x^{\frac{3k}{2} + \frac{1}{4}}}{M^{\frac{3k}{2}+1}} + \frac{\ell^{\frac{3k}{2} -1} x^{\frac{3k}{2} +1}}{LM^{\frac{3k}{2}+1}} \bigg).\notag\\\label{error term S(j,ell,M)}
\end{align}
Upon substitution of \eqref{error term S(j,ell,M)} back into \eqref{S j}, we show that
\begin{align}
    S_j  &=  \sum_{0\leq \ell \leq M-1} \bigg(\frac{2 \ell^{\frac{3k}{2}} x^{\frac{3k}{2} + 1}}{LM^{\frac{3k}{2}+1}} + O\bigg(\frac{ \ell^{\frac{3k}{2}} x^{\frac{3k}{2} + 1}}{KM^{\frac{3k}{2}+1}} + \frac{ K^{\frac{1}{2}} \ell^{\frac{3k}{2} - \frac{1}{4}} x^{\frac{3k}{2} + \frac{3}{4}}}{M^{\frac{3k}{2} + \frac{3}{4}}} + \frac{K\ell^{\frac{3k}{2} - \frac{1}{2}}x^{\frac{3k}{2} + \frac{1}{2}}}{M^{\frac{3k}{2} + \frac{1}{2}}} + \frac{  \ell^{\frac{3k}{2} + \frac{1}{4}} x^{\frac{3k}{2} + \frac{1}{4}}}{K^{\frac{1}{2}}M^{\frac{3k}{2} + \frac{1}{4}}}\notag\\
&\quad + \frac{\ell^{\frac{3k}{2}}x^{\frac{3k}{2} + \frac{1}{4}}}{M^{\frac{3k}{2}+1}} + \frac{\ell^{\frac{3k}{2} -1} x^{\frac{3k}{2} +1}}{LM^{\frac{3k}{2}+1}} \bigg)\bigg)\notag\\
&=\frac{2  x^{\frac{3k}{2} + 1}}{L(\frac{3}{2}k+1)}+O_k\bigg(\frac{  x^{\frac{3k}{2} + 1}}{K}+K^{\frac{1}{2}}  x^{\frac{3k}{2} + \frac{3}{4}}+Kx^{\frac{3k}{2}+\frac{1}{2}}+\frac{  M x^{\frac{3k}{2} + \frac{1}{4}}}{K^{\frac{1}{2}}}+x^{\frac{3k}{2}+\frac{1}{4}}+\frac{x^{\frac{3k}{2} + 1}}{LM}\bigg).\notag
\end{align}
Therefore, substituting the above back into the expression for $V_k(x,L)$ in \eqref{Vk(x,L)}, and applying Euler-Maclaurin summation to the sum over $j$, we obtain

\begin{align}
   V_k(x,L) &=\left( \frac{2}{L\sqrt{3}} \right)^{k}\sum_{j=1}^{L/2} j^k \bigg(\frac{2  x^{\frac{3k}{2} + 1}}{L(\frac{3}{2}k+1)}+O_k\bigg(\frac{  x^{\frac{3k}{2} + 1}}{K}+K^{\frac{1}{2}}  x^{\frac{3k}{2} + \frac{3}{4}}+Kx^{\frac{3k}{2}+\frac{1}{2}}\notag\\
   &\quad+ \frac{  M x^{\frac{3k}{2} + \frac{1}{4}}}{K^{\frac{1}{2}}}+x^{\frac{3k}{2}+\frac{1}{4}}+\frac{x^{\frac{3k}{2} + 1}}{LM}\bigg)\bigg)\notag\\
   &= \frac{2^{k}}{3^{k/2}L^{k}} \left(\frac{(\frac{L}{2})^{k+1}}{k+1}+\frac{(\frac{L}{2})^{k}}{2}+O(L^{k-1})\right) \bigg(\frac{2  x^{\frac{3k}{2} + 1}}{L(\frac{3}{2}k+1)}+O_k\bigg(\frac{  x^{\frac{3k}{2} + 1}}{K}+K^{\frac{1}{2}}  x^{\frac{3k}{2} + \frac{3}{4}}\notag\\
   &\quad+ Kx^{\frac{3k}{2}+\frac{1}{2}}+ \frac{  M x^{\frac{3k}{2} + \frac{1}{4}}}{K^{\frac{1}{2}}}+x^{\frac{3k}{2}+\frac{1}{4}}+\frac{x^{\frac{3k}{2} + 1}}{LM}\bigg)\bigg)\notag\\
   &=
\frac{x^{\frac{3k}{2} + 1}}{3^{k/2} (k+1) \left(\frac{3}{2} k + 1\right)} +  O_k\bigg(\frac{x^{\frac{3k}{2} + 1}}{ L } + 
\frac{Lx^{\frac{3k}{2} + 1}}{K} + LK^{\frac{1}{2}}x^{\frac{3k}{2} + \frac{3}{4}} + LKx^{\frac{3k}{2} + \frac{1}{2}}\notag\\
&\quad+ \frac{LMx^{\frac{3k}{2} + \frac{1}{4}}}{K^{\frac{1}{2}}} + Lx^{\frac{3k}{2} + \frac{1}{4}} + \frac{x^{\frac{3k}{2} + 1}}{M}\bigg).\label{Vk(x,L) final form}
\end{align}
Finally, we need to optimize $M, \ K$, and $L$ to minimize the error term in \eqref{Vk(x,L) final form}. To do this, apply Lemma \ref{lem:min_max_for_minor} with 
\[
F(M) =  \frac{x^{\frac{3k}{2} + 1}}{M}, G(M)=\frac{LMx^{\frac{3k}{2} + \frac{1}{4}}}{K^{\frac{1}{2}}} .
\]
We obtain $M = \frac{K^{1/4}x^{3/8}}{L^{1/2}}$. Substituting this back into \eqref{Vk(x,L) final form}, and applying Lemma \ref{lem:min_max_for_minor} with 
\begin{align*}
&F(L) =  \frac{x^{\frac{3k}{2} + 1}}{L}, G_1(L)=\frac{Lx^{\frac{3k}{2} + 1}}{K},G_2(L)=LK^{\frac{1}{2}}x^{\frac{3k}{2} + \frac{3}{4}}, G_3(L)=LKx^{\frac{3k}{2} + \frac{1}{2}}, \\
&G_4(L)=Lx^{\frac{3k}{2} + \frac{1}{4}} , G_5(L)=\frac{L^{1/2} x^{\frac{3k}{2} + \frac{5}{8}}}{K^{1/4}},
\end{align*}
we have 
\begin{align}
    V_{k}(x,L) &=
\frac{x^{\frac{3k}{2} + 1}}{3^{k/2} (k+1) \left(\frac{3}{2} k + 1\right)} +  O_k\bigg(
\frac{x^{\frac{3k}{2} + 1}}{K^{\frac{1}{2}}}
 + x^{\frac{3k}{2} + \frac{7}{8}}K^{\frac{1}{4}}
+ x^{\frac{3k}{2} + \frac{3}{4}}  K^{\frac{1}{2}}
\notag\\
&\quad + x^{\frac{3k}{2} + \frac{5}{8}}
 + \frac{x^{\frac{3k}{2} + \frac{3}{4}}}{K^{\frac{1}{6}}} \bigg).\label{equation 4.22}
\end{align}
Note that the last term can be absorbed by the third term in the error term of \eqref{equation 4.22}, so apply Lemma \ref{lem:min_max_for_minor} once again, we eventually get
\begin{align*}
    V_k(x,L) = \frac{x^{\frac{3k}{2} + 1}}{3^{k/2} (k+1) \left(\frac{3}{2} k + 1\right)} +  O_k\left(x^{\frac{3k}{2} + \frac{11}{12}}
\right).
\end{align*}
A similar calculation yields
\[
U_k(x,L) = \frac{x^{\frac{3}{2}k + 1}}{3^{k/2}(\frac{3}{2}k + 1)(k+1)} 
+O_k\left(x^{\frac{3k}{2} + \frac{11}{12}} \right).
\]
Thus, using \eqref{sandwich}, we have, for any $k\in\N$, 
\[
M_k(x) = \frac{x^{\frac{3}{2}k + 1}}{3^{k/2}(\frac{3}{2}k + 1)(k+1)} 
+O_k\left( x^{\frac{3k}{2} + \frac{11}{12}} \right).
\]
This finishes the proof of Theorem \ref{Moments theorem}. 
\end{proof}

 Theorem \ref{asymptotic for A(x) theorem} now follows as a corollary of Theorem \ref{Moments theorem} by taking $k=1$.

\end{document}